\documentclass{article}

\usepackage[12pt]{extsizes}
\usepackage{cmap}
\usepackage[utf8]{inputenc}
\usepackage[T2A]{fontenc}
\usepackage[english]{babel}
\usepackage{amsmath}
\usepackage{amsthm}
\usepackage{amssymb}
\usepackage{amsfonts}
\usepackage{enumerate}
\usepackage[affil-it]{authblk}

\newcommand{\RN}{\mathbb{R}} 
\renewcommand{\C}{\mathbb{C}} 
\newcommand{\CN}{\mathbb{C}}
\newcommand{\ZN}{\mathbb{Z}} 

\newcommand{\eps}{\ensuremath\varepsilon}
\newcommand{\intl}{\int\limits}

\newcommand{\ovl}{\overline}

\newcommand{\mc}{\mathcal}
\newcommand{\mb}{\mathbf}

\newcommand{\Bun}{\operatorname{Bun}}
\newcommand{\Pone}{{\mathbb{P}^1}}

\renewcommand{\th}{\vartheta}
\newcommand{\dd}[1]{\frac{\partial}{\partial #1}}
\newcommand{\abs}[1]{\lvert #1\rvert}
\newcommand{\norm}[1]{\| #1\|}
\newcommand{\PGL}{\operatorname{PGL}}
\newcommand{\mby}{\mb{y}}
\newcommand{\mbz}{\mb{z}}

\begin{document}
\newtheorem{thr}{Theorem}[section]
\newtheorem*{thr*}{Theorem}
\newtheorem{lem}[thr]{Lemma}
\newtheorem*{lem*}{Lemma}
\newtheorem{cor}[thr]{Corollary}
\newtheorem*{cor*}{Corollary}
\newtheorem{prop}[thr]{Proposition}
\newtheorem*{prop*}{Proposition}
\newtheorem{stat}[thr]{Statement}
\newtheorem*{stat*}{Statement}
\theoremstyle{definition}
\newtheorem{defn}[thr]{Definition}
\theoremstyle{remark}
\newtheorem{rem}[thr]{Remark}
\newtheorem*{rem*}{Remark}
\newtheorem{example}[thr]{Example}
\author{Daniil Klyuev}

\title{Analytic Langlands correspondence for $\PGL_2(\C)$ on a genus one curve with parabolic structures}
\maketitle
\abstract{Analytic Langlands correspondence was proposed by Etingof, Frenkel and Kazhdan. On one side of this correspondence there are certain operators on $L^2(\Bun_G)$, called Hecke operators, where $\Bun_G$ is the variety of stable $G$-bundles on $X$ and $L^2(\Bun_G)$ is a Hilbert space of square-integrable half-densities. The compactness conjecture says that Hecke operators are bounded and, moreover, compact. In~\cite{EFK3} Etingof, Frenkel and Kazhdan prove this and other conjectures in the case of $G=\PGL_2$ and $X=\Pone$ with parabolic structures. We investigate the case of $G=\PGL_2$ and genus one curve over complex numbers with parabolic structures. We obtain an explicit formula for Hecke operators and prove the compactness conjecture in this case.}
\section{Introduction}
Analytic Langlands correspondence was developed in~\cite{EFK1}, \cite{EFK2}, \cite{EFK3}, \cite{EFK4}, motivated in part by the works~\cite{BK}, \cite{Ko}, \cite{La}, \cite{Te}. The general (conjectural) definition of Hecke operators was given in~\cite{EFK2}. The paper~\cite{EFK3} proves almost all of the conjectures of~\cite{EFK1},~\cite{EFK2} in the case when $G=\PGL_2$, $X=\Pone$. Our plan is to do the same when $F=\C$, $G=\PGL_2$ and $X$ has genus one.

Let $X$ be a smooth projective curve over $\C$, $G$ be a semisimple complex Lie group. We choose a finite subset of points $S\subset X$. A parabolic $G$-bundle is a principal $G$-bundle with a choice of a Borel reduction at each point $t\in S$. In our case $G=\PGL_2$ this means the following. A $G$-bundle is a rank two vector bundle $E$ with equivalence relations $E\sim E\otimes L$ for a line bundle $L$. A choice of Borel reduction at $t$ is the choice of a line in the fiber $E_t$.

Consider the moduli space $\Bun_G^{\circ}$ of stable parabolic $G$-bundles. We will define the Hilbert space $\mc{H}=L^2(\Bun_G^{\circ})$ of square-integrable half-densities below. Hecke operators are a family of commuting bounded operators acting on $\mc{H}$. The main goal of the analytic Langlands correspondence is to describe the joint spectrum of Hecke operators.

One of the main conjectures of analytic Langlands correspondence is the compactness conjecture. It states that Hecke operators are compact (hence, bounded) operators on $L^2(\Bun_G^{\circ})$. We prove the compactness conjecture in our case. 


The structure of the paper is as follows.

We start with describing a birational parametrization of $\Bun_0$, the component of $\Bun_G^{\circ}$ that consists of bundles of even degree. Let $S=\{t_0,\ldots,t_m\}$. Each parabolic point $t_i$ gives rise to a birational involution $S_i$ on $\Bun_G^{\circ}$ that interchanges $\Bun_0$ and $\Bun_1$. We use $S_0$ to identify $\Bun_0$ and $\Bun_1$. Let $HM_{x,l}$ be the Hecke modification at $x$ with parameter $l$ defined below. Instead of computing the action of $HM_{x,l}\colon \Bun_0\to \Bun_1$ we compute the action of $S_0 \circ HM_{x,l}$ on $\Bun_0$. 

We use $S_0 \circ HM_{x,l}$ to define Hecke operators $H_x$. We can do that since the actual Hecke operators look like $\begin{pmatrix}
0 & H_x\\
H_x & 0
\end{pmatrix}$, so their eigenvectors and eigenvalues are in two-to-one correspondence with the eigenvectors and eigenvalues of $H_x$, namely an eigenvalue $\beta$ corresponds to $\pm\beta$. We use the formula for $S_0 \circ HM_{x,l}$ to compute the action of Hecke operators on $L^2(\Bun_0)$. We prove that the Hecke operator for one parabolic point is unitarily equivalent to the Hecke operator for four parabolic points on $\Pone$. Using this, we prove that Hecke operators are bounded for any number of points. 

Similarly to~\cite{EFK3}, the Hecke operator can be written as \begin{equation}
\label{EqGroupAndKernelHeckeOperatorIntroduction}
(H_xf)(p,y_1,\ldots,y_m)=\int_{\C}\big(\rho(g_{x,p,q})f\big)(q,y_1,\ldots,y_m)K(p,q,x)dq,
\end{equation} where $K(p,q,x)=\mathbf{K}(\wp(p),\wp(q),\wp(x))$ and $\mathbf{K}$ coincides with the kernel for the Hecke operator on $\Pone$ with four parabolic points, $g_{x,p,q}$ is an element of $G=\PGL_2^m$, $\rho$ is the standard representation of $G$ on $L^2(\Pone)^m$, and the action of $G$ is in the variables $y_1,\ldots,y_m$. 

 Compactness in~\cite{EFK3} was deduced from the following corollary of the Harish-Chandra theorem: for $f\in L^1(G)$ the operator $K_f=\int \rho(g)f(g)dg$ is compact. We also deduce compactness from this result.

Finally, we check that Hecke operators are self-adjoint and commute with each other.



\subsection{Acknowledgments}
I am grateful to Pavel Etingof for formulation of the problem, stimulating discussions and for helpful remarks on the previous versions of this paper. I would like to thank Davide Gaiotto for telling me the formula for an intertwining kernel in the case of genus one curve.
\section{Main definitions}
The general definition of Hecke operators uses the variety $\Bun_G^{\circ}$ of stable bundles. The definition of a stable quasiparabolic bundle is given, for example, in~\cite{EFK3}, Definition 2.1. We are interested in a birational parametrization of this variety, so we do not need a precise definition of a stable bundle here. Note that $\Bun_G^{\circ}$ is a union of $\Bun_0$ and $\Bun_1$ corresponding to stable bundles with determinant of even and odd degree respectively. As we will explain below, we can use one of the parabolic points to identify $\Bun_0$ and $\Bun_1$. Hence we are interested in a birational parametrization of $\Bun_0$.



Choose a point $0\in X$ for convenience.

Let us say that a vector bundle on $X$ is semisimple if it is isomorphic to a direct sum of line bundles. Note that semisimple rank $2$ bundles form an open subset of $\Bun_0$. Tensoring by a line bundle, we can assume that the determinant bundle $\det E$ is trivial, hence $E=\mc{L}_p\oplus \mc{L}_{-p}$ for some $p\in X$. Here $\mc{L}_p=\mc{O}(p-0)$ corresponds to the Weil divisor $[p]-[0]$.

We can describe $\mc{L}_p$ as follows. Let $X=\CN/\Lambda$, where $\Lambda$ is a lattice generated by $1,\tau$. Choose a lift of $p$ to $\CN$ and denote it also by $p$. Let $U$ be a $\Lambda$-invariant open subset of $\CN$. Then sections of $\mc{L}_p$ over $U$ are given by functions $f$ holomorphic on $U$ such that $f(z+1)=f(z)$, $f(z+\tau)=e^{2\pi i p}f(z)$.

Hence we will write sections of $E$ as pairs $\begin{pmatrix}
f_+ \\ f_-
\end{pmatrix}$ such that $f_{\eps}(z+1)=f_{\eps}(z)$, $f_{\eps}(z+\tau)=e^{2\pi i \eps p}f_{\eps}(z)$ for $\eps=\pm$. 

Recall that the auxiliary Jacobi theta function $\th(z)=\th_{11}(z)$ has simple zeroes at $\Lambda$ and  satisfies $\th(z+1)=-\th(z)$, $\th(z+\tau)=-e^{-\pi i (\tau+2z)}\th(z)$. Hence for any $a\in \CN$ the function $f=f_a(z)=\frac{\th(z-a)}{\th(z)}$ has a simple zero at $a$, a simple pole at $0$ and satisfies $f(z+1)=f(z)$, $f(z+\tau)=e^{2\pi i a}f(z)$. Hence $f_p$ is a rational section of $\mc{L}_p$ with divisor of zeroes $[p]-[0]$.

Let $t_0,\ldots,t_m$ be the marked points on $X$. Let us choose a lift of $t_0,\ldots,t_m$ to $\CN$, which we will also denote $t_0,\ldots,t_m$. We can assume that $t_0=0$.

We will parametrize parabolic lines $l_0,\ldots,l_m$ at $t_0,\ldots,t_m$ as follows. For $i=0,\ldots,m$ we find $y_i\in \Pone$ such that $l_i$ is spanned by $\begin{pmatrix}
y_i\\1
\end{pmatrix}\in E_{t_i}$. We will consider a dense open subset of $\Bun_0$ consisting of parabolic bundles with $y_i\neq 0,\infty$. As a vector bundle, $E$ has a group of automorphisms $\CN^{\times}\times \CN^{\times}$. The action of $(a,b)$ sends $y_i$ to $ab^{-1}y_i$. Hence we can assume that $y_0=1$. We get the following
\begin{prop}
$\Bun_0$ is birationally equivalent to \[(\CN\times (\Pone)^m)/(C_2\ltimes \ZN^2),\] where $C_2$ is a group of order two. The nontrivial element of $C_2$ acts on $\ZN^2$ as $(a,b)\mapsto (-a,-b)$ and on $\CN\times(\Pone)^m$ as $(p,y_1,\ldots,y_m)\mapsto (-p,y_1^{-1},\ldots,y_m^{-1})$. The action of $(a,b)\in \ZN$ sends $(p,y_1,\ldots,y_m)$ to $(p+\tfrac12a+\tfrac12b\tau,e^{2\pi i b t_1}y_1,\ldots, e^{2\pi i b t_m}y_m)$.
\end{prop}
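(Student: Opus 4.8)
The plan is to build the birational equivalence in stages, peeling off the structure one group factor at a time, following the discussion that precedes the statement. First I would establish the generic parametrization: over the open locus of $\Bun_0$ consisting of semisimple (split) bundles, every bundle is, after tensoring by a line bundle, of the form $E=\mc{L}_p\oplus\mc{L}_{-p}$; this gives the coordinate $p\in\CN$ (a lift of a point of $X=\CN/\Lambda$). The parabolic lines $l_i\subset E_{t_i}$ generically avoid the two coordinate axes, so each is spanned by $\binom{y_i}{1}$ with $y_i\in\Pone$, giving the coordinates $(y_1,\dots,y_m)$ after using the automorphism torus $\CN^\times\times\CN^\times$ of $E$ to normalize $y_0=1$. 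So the naive parameter space is $\CN\times(\Pone)^m$, and what remains is to identify exactly which pairs of parameters give isomorphic parabolic bundles.

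Next I would chase the residual identifications. Two sources: (a) the choice of lift of $p$ to $\CN$ — changing $p\mapsto p+1$ or $p\mapsto p+\tau$ leaves the point of $X$ fixed but may alter the trivialization of $E$ and hence the $y_i$; (b) the residual automorphisms of $E$ (the diagonal torus, already used to set $y_0=1$, leaves a finite stabilizer) together with the swap $\mc{L}_p\leftrightarrow\mc{L}_{-p}$. For (a): $\mc{L}_{p+1}\cong\mc{L}_p$ canonically (the quasi-periodicity factor $e^{2\pi i\varepsilon p}$ is unchanged), but $\mc{L}_{p+\tau}\cong\mc{L}_p$ requires multiplying a section by a function with the right quasi-periods — concretely conjugating by the function $f_{\tau}$-type factor $e^{-2\pi i \varepsilon z}$ (or $\th$-quotient), which rescales the fiber at $t_i$ by $e^{\mp 2\pi i t_i}$ on the two summands, hence sends $y_i\mapsto e^{2\pi i t_i}y_i$ on the $+$-component relative to the $-$-component; iterating gives the stated action of $(a,b)\in\ZN^2$, namely $p\mapsto p+\tfrac12 a+\tfrac12 b\tau$ and $y_i\mapsto e^{2\pi i b t_i}y_i$. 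The appearance of $\tfrac12$ rather than $1$ is the key subtlety: it comes from our having fixed $\det E$ trivial, so the divisor $[p]-[0]$ is only well-defined up to $2$-torsion once we allow twisting $E$ by a line bundle while keeping it in $\Bun_0$ — equivalently, $\mc{L}_p$ and $\mc{L}_{p+\frac12(\text{period})}$ become isomorphic as \emph{parabolic} bundles after the compensating automorphism. For (b): swapping the two summands sends $p\mapsto -p$ and $\binom{y_i}{1}\mapsto\binom{1}{y_i}$, i.e. $y_i\mapsto y_i^{-1}$; this is the generator of $C_2$. One then checks $C_2$ normalizes the $\ZN^2$-action with the indicated sign $(a,b)\mapsto(-a,-b)$, so the full group is the semidirect product $C_2\ltimes\ZN^2$.

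Finally I would assemble these: define the map $\CN\times(\Pone)^m\to\Bun_0$ sending $(p,y_1,\dots,y_m)$ to the parabolic bundle $(\mc{L}_p\oplus\mc{L}_{-p}$, lines through $\binom{y_i}{1})$; show it is dominant (its image is the dense open semisimple locus with $y_i\neq 0,\infty$) and that its fibers are exactly the orbits of $C_2\ltimes\ZN^2$ described above, using the classification of automorphisms of a split rank-two bundle together with the analysis in (a)–(b). This yields the birational equivalence $\Bun_0\sim(\CN\times(\Pone)^m)/(C_2\ltimes\ZN^2)$.

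The main obstacle I anticipate is pinning down the $\ZN^2$-action precisely — in particular verifying that the correct lattice acting on $p$ is $\tfrac12\Lambda$ and not $\Lambda$, and getting the exponential factors $e^{2\pi i b t_i}$ (and the absence of a factor in the $a$-direction) exactly right. This is bookkeeping with the $\th$-function quasi-periods and the compensating gauge transformations that restore $\det E=\mc{O}$, and it is easy to be off by a sign, a factor of $2$, or a swap of the roles of $a$ and $b$. Everything else — the split-locus being open and dense, the form of the parabolic lines, the $C_2$ swap — is routine.
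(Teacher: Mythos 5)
Your proposal follows the same route as the paper's proof: parametrize the dense semisimple locus by $(p,y_1,\ldots,y_m)$ with $E=\mc{L}_p\oplus\mc{L}_{-p}$, then identify the residual identifications as the $C_2$ swap of the two summands together with twisting by $\mc{L}_c$ for $c\in\tfrac12\Lambda$ (which is exactly how the paper gets the $\ZN^2$-action, tracking its effect on the $y_i$ through the compensating quasi-periodic factor $e^{-4\pi i b z}$). The only wobble is your intermediate claim that $p\mapsto p+\tau$ rescales $y_i$ by $e^{2\pi i t_i}$ --- the relative factor between the two summands is $e^{4\pi i t_i}$, consistent with $b=2$ in the stated action --- but you explicitly flag this factor-of-two bookkeeping as the danger point and you state the correct final action, so the argument is sound.
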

\begin{proof}
From the discussion above we see that for any semisimple parabolic $\PGL_2$-bundle $E$ we can find $(p,y_1,\ldots,y_m)$ such that $E$ is isomorphic to $\mc{L}_p\oplus \mc{L}_{-p}$ with the parabolic structure as above. Two bundles $\mc{L}_p\oplus\mc{L}_{-p}$ and $\mc{L}_q\oplus \mc{L}_{-q}$ are isomorphic to each other if and only if $2(p\pm q)\in \Lambda$ for some choice of sign. 

Note that $(p,y_1,\ldots,y_m)$ and $(-p,y_1^{-1},\ldots,y_m^{-1})$ describe the same parabolic bundle. Hence we can assume that $2(p-q)\in \Lambda$.

Let $q-p=a+b\tau$ with $a,b\in \tfrac12\ZN$. We have $(\mc{L}_p\oplus\mc{L}_{-p})\otimes \mc{L}_{q-p}\cong \mc{L}_q\oplus \mc{L}_{2q-p}\cong \mc{L}_q\oplus \mc{L}_{-q}$. The first isomorphism sends $(f,g)\otimes h$ to $(fh,gh)$, the second sends $(f,g)$ to $(f,ge^{-4\pi i b z})$, hence the line $(1:1)$ at zero stays the same and the line $(y_i:1)$ at $t_i$ goes to the line $e^{4\pi i b z}y_i:1$.
\end{proof}

We recall the notion of a Hecke modification. For $x\in X\setminus S$ and a line $l\subset E_x$ define the Hecke modification of $E$ to be the vector bundle whose sections are the rational sections of $E$ with at most a first-order pole at $x$ with residue that belongs to $l$. This operation can make a stable bundle unstable, but it is defined on a dense open subset of $\Bun_G$.

Note that $(HM_{x,l}E)_y=E_y$ for all $y\neq x$ and we have the short exact sequence of vector spaces $0\to E_x/l\to (HM_{x,l}E)_x\to l\to 0$: the second nontrivial map is taking residue, the first nontrivial map comes from the tautological map $E\to HM_{x,l}E$. Hence we can define $HM_{x,l}$ for parabolic bundles in the case when $x\notin S$ choosing the same line at each $t\in S$. For $t_i\in S$ we can also define $HM_{t_i}=HM_{t_i,l_i}$: the parabolic line at $t_j\neq t_i$ is the same, the parabolic line of $HM_{t_i}E$ at $t_i$ is $E_{t_i}/l_i$. Note that 

\begin{equation}
\label{EqHMSquaredIsId}
HM_{x,E_x/l}HM_{x,l}E=E\otimes O(x)\sim E.
\end{equation} It follows that $HM_{t_i}^2$ is identity. Also Hecke modifications at two different points commute.

Since $\det(HM_{x,l}E)=\det E\otimes O(x)$, Hecke modification interchanges $\Bun_0$ and $\Bun_1$. We see that $S_i=HM_{t_i}$ is an involution of $\Bun_G$ that interchanges $\Bun_0$ and $\Bun_1$. We will use $S_0=HM_0$ to identify $\Bun_0$ and $\Bun_1$. We will write $S_0$ as $HM_0^-=HM_0\otimes O(x)^{-1}$: it is defined similarly to $HM_0$, except the sections are regular and their fiber at $x$ belongs to $l_0$.

In order to write Hecke operators, first we should describe the action of a Hecke modification on a parabolic bundle. Let $2x\in X$ be a point, $l=\begin{pmatrix}
s \\ 1
\end{pmatrix}$ be a line in $E_{2x}$.

\begin{prop}
Let $E$ correspond to $(p,y_1,\ldots,y_m)$. Then $S_0 \circ HM_{2x,l}E$ corresponds to $(q,z_1,\ldots,z_m)$, where $\pm q$ are the roots of
\[\frac{\th(q-x+p)\th(q+x-p)}{\th(q-x-p)\th(q+x+p)}-s\]  and \[z_i=\frac{\tilde{\th}(t_i,-p+x+q)y_i-\tilde{\th}(t_i,p+x+q)}{\tilde{\th}(t_i,-p+x-q)y_i-\tilde{\th}(t_i,p+x-q)},\] where $\tilde{\th}(z,a)=\frac{\th(z-a)}{\th(-a)}$.
\end{prop}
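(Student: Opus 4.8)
The plan is to compute the Hecke modification $HM_{2x,l}E$ explicitly in terms of rational sections of $E = \mc{L}_p \oplus \mc{L}_{-p}$, then apply $S_0 = HM_0^-$, and finally read off the new parameters $(q, z_1, \ldots, z_m)$ by normalizing as in Proposition~2.1. First I would write down a basis of rational sections of $HM_{2x,l}E$: these are pairs $\begin{pmatrix} f_+ \\ f_- \end{pmatrix}$ with $f_\eps$ a section of $\mc{L}_{\eps p}$ allowed a simple pole at $2x$ whose residue lies on the line $l = \begin{pmatrix} s \\ 1 \end{pmatrix}$. Using the theta-quotient building blocks $f_a(z) = \th(z-a)/\th(z)$ from the excerpt, a section with a simple pole at $2x$ in the $f_+$-component is $\th(z - 2x + p)/\bigl(\th(z) \cdot (\text{something fixing quasiperiodicity})\bigr)$; more precisely one uses $\th(z-a)/\th(z-2x)$ type ratios to put the pole at $2x$ and a zero wherever the quasiperiodicity and the determinant constraint force it. The residue condition ties the two components together, cutting the two-dimensional space of ``pole at $2x$'' sections down appropriately.

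The key computational step is to recognize that $S_0 \circ HM_{2x,l}E$ is again semisimple of even degree, hence isomorphic to $\mc{L}_q \oplus \mc{L}_{-q}$ for some $q$, and to pin down $q$. I would do this by exhibiting the two line subbundles of $S_0 \circ HM_{2x,l}E$ directly: a rational section generating $\mc{L}_{\pm q}$ inside it must be, in the original trivialization, a pair $\begin{pmatrix} f_+ \\ f_- \end{pmatrix}$ whose divisor and quasiperiodicity match $\mc{L}_{\pm q}$ after accounting for the modification at $2x$ and the one at $0$. Matching zeros and poles (a section of $\mc{L}_q$ has a zero at $q$ and a pole at $0$) against the theta-function bookkeeping produces, via the quasiperiodicity-preserving identity for $\th$, exactly the stated equation
\[
\frac{\th(q-x+p)\,\th(q+x-p)}{\th(q-x-p)\,\th(q+x+p)} - s = 0 ;
\]
the numerator and denominator are the natural ``theta of (target parameter $\mp$ modification point $\pm$ source parameter)'' factors, and $s$ enters linearly because the residue line has slope $s$. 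The sign ambiguity $\pm q$ is the expected one: $q$ and $-q$ give isomorphic bundles, and the quadratic-in-$\th$ nature of the equation (after clearing denominators it has the two roots $\pm q$) reflects this.

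For the parabolic lines, I would track how a section that trivializes the modified bundle near $t_i$ transforms. Concretely, a frame for $S_0 \circ HM_{2x,l}E$ near $t_i$ is obtained from the frame $\begin{pmatrix} 1 \\ 0 \end{pmatrix}, \begin{pmatrix} 0 \\ 1 \end{pmatrix}$ of $E$ by a gauge transformation that is the product of the elementary modifications at $2x$ and at $0$; evaluating this gauge matrix at $t_i$ and applying it to the line $\begin{pmatrix} y_i \\ 1 \end{pmatrix}$ gives $\begin{pmatrix} z_i \\ 1 \end{pmatrix}$ after renormalizing the bottom entry to $1$. The entries of that matrix are ratios of theta functions evaluated at $t_i$ shifted by $\pm p \pm x \pm q$, which is precisely why the answer has the form
\[
z_i = \frac{\tilde{\th}(t_i,-p+x+q)\,y_i - \tilde{\th}(t_i,p+x+q)}{\tilde{\th}(t_i,-p+x-q)\,y_i - \tilde{\th}(t_i,p+x-q)}
\]
with $\tilde{\th}(z,a) = \th(z-a)/\th(-a)$; the normalization $\tilde{\th}(0,a) = 1$ is exactly what makes the line $\begin{pmatrix} 1 \\ 1 \end{pmatrix}$ at $t_0 = 0$ stay put, consistent with the use of $S_0$. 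The main obstacle I anticipate is bookkeeping rather than conceptual: keeping the quasiperiodicity factors $e^{2\pi i (\cdot) p}$ straight through the two successive modifications, and correctly identifying which theta-quotients give honest sections (versus sections of a twist), so that the final expressions land on the nose with the stated normalizations and no stray exponential or sign. I would double-check the result by specializing to $m = 0$ (recovering the formula for $q$ alone) and by verifying that $s \to \infty$ and $s \to 0$ degenerate to the expected Hecke modifications along the coordinate lines.
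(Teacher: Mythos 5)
Your plan is essentially the paper's proof: exhibit the two line subbundles of $S_0\circ HM_{2x,l}E$ via rational sections written as theta quotients with simple poles at $0$ and $2x$, pin down the bundle parameter from the two residue conditions (residue on $l_0$ at $0$ fixing the relative constant, residue on $l$ at $2x$ giving the equation in which $s$ appears), and read off $z_i$ from the resulting transition matrix evaluated at $t_i$, normalized so the new line at $t_0=0$ is again $(1:1)$. The only device you leave implicit, which the paper uses to organize the bookkeeping, is to parametrize each candidate section by its vanishing point $r$ (its second zeros in each component being forced to $2x\pm p-r$ by the divisor class), so that the residue equation is an elliptic function of $r$ with two roots satisfying $r_1+r_2=2x$, and $q=r_1-x$ is the recentered parameter.
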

\begin{rem}
Note that these formulas are indeed well-defined with respect to the action $C_2\ltimes \ZN^2$. For example, if we take $-q$ instead of $q$ we get $z_i^{-1}$ instead of $z_i$ and similarly with everything else.
\end{rem}
\begin{proof}
Note that $\begin{pmatrix}
f_p\\0
\end{pmatrix}$ and $\begin{pmatrix}
0\\f_{-p}
\end{pmatrix}$ are the only (up to scaling) rational sections of $\mc{L}_p\oplus\mc{L}_{-p}$ that have a simple pole at zero and vanish at some point. We will find sections $g_1,g_2$ of $S_0\circ HM_{2x,l}E=HM_{0,l_0}^{-}\circ HM_{2x,l}E$ with the same property. Note that a section of $HM_{0,l_0}^{-}\circ HM_{2x,l}E$ with a simple pole at zero is the same as a section of $HM_{0,l_0}\circ HM_{2x,l}E$. Hence $g_1,g_2$ can have a simple pole at zero such that the residue belongs to $l_0$ and a simple pole at $2x$ such that the residue belongs to $l$.

Let $g=\begin{pmatrix}
g_+\\g_-
\end{pmatrix}$. Assume that $g$ vanishes at a point $r$. Then $g_+$ is a section of $\mc{L}_p$ with simple poles at $0,2x$ and a root at $r$, hence it has another root $z_+$ such that $[r]+[z_+]-[0]-[2x]$ is equivalent to $[p]-[0]$. This means $z_+=2x+p-r$. Similarly $g_-$ has the second root $z_-=2x-p-r$. Hence we can assume that for some complex number $c$ \[g_+(z)=c\frac{\th(z-r)\th(z+r-2x-p)}{\th(z)\th(z-2x)},\]\[g_-(z)=\frac{\th(z-r)\th(z+r-2x+p)}{\th(z)\th(z-2x)}.\] Computing the residue at zero gives $c\th(r-2x-p)=\th(r-2x+p)$, hence 
\begin{equation}
\label{EqCThroughYXP}
c=\frac{\th(r-2x+p)}{\th(r-2x-p)}.
\end{equation}
 Computing the residue at $2x$ gives $c\th(r-p)=s\th(r+p)$. Using~\eqref{EqCThroughYXP} we get \[\frac{\th(r-2x+p)\th(r-p)}{\th(r-2x-p)\th(r+p)}=s.\] The left-hand side is an elliptic function in $r$ with two simple poles, hence this equation has two solutions $r_1,r_2$. Setting $r=r_1,r_2$ in formulas for $g_+,g_-,c$ gives sections $g_1,g_2$. Note that $r_1+r_2=2x$.
 
 The section $g_i$ has divisor of zeroes $[r_i]-[0]$, hence it generates a subbundle $\mc{L}_{r_i}$ inside $S_0 \circ HM_{2x,l}E$. It follows that $S_0\circ HM_{2x,l}E$ is isomorphic to $\mc{L}_{r_1}\oplus \mc{L}_{r_2}$ and the isomorphism $\phi\colon \mc{L}_{r_1}\oplus \mc{L}_{r_2}\to S_0 \circ HM_{2x,l}E$ is given by \[\phi\begin{pmatrix}
 \alpha f_{r_1}\\
 \beta f_{r_2}
 \end{pmatrix}=\alpha g_1+\beta g_2=\begin{pmatrix}
 \alpha g_{1+}+\beta g_{2+}\\
 \alpha g_{1-}+\beta g_{2-}
 \end{pmatrix}.\] If we denote $\alpha f_{r_1}$ by $A$ and $\beta f_{r_2}$ by $B$, this becomes \[\phi\begin{pmatrix}
 A\\B
 \end{pmatrix}=\begin{pmatrix}
 A\frac{g_{1+}}{f_{r_1}}+B \frac{g_{2+}}{f_{r_2}}\\
 A\frac{g_{1-}}{f_{r_1}}+B\frac{g_{2-}}{f_{r_2}}
 \end{pmatrix},\] so that $\phi$ is given by the matrix $M(z)=\begin{pmatrix}
 \frac{g_{1+}}{f_{r_1}} & \frac{g_{2+}}{f_{r_2}}\\
 \frac{g_{1-}}{f_{r_1}} & \frac{g_{2-}}{f_{r_2}}
 \end{pmatrix}$. Note that for $i=1,2$, $\eps=\pm$, \[g_{i\eps}=c_{i\eps}\frac{\th(z-r_i)\th(z+r_i-2x-\eps p)}{\th(z)\th(z-2x)}=c_{i\eps}f_{r_i}\frac{\th(z+r_i-2x-\eps p)}{\th(z-2x)},\] where $c_{i\eps}$ is $c_i$ for $\eps=+$ and $1$ for $\eps=-$. Therefore
 \[M(z)=\begin{pmatrix}
 c_1\frac{\th(z+r_1-2x-p)}{\th(z-2x)} & c_2\frac{\th(z+r_2-2x-p)}{\th(z-2x)}\\
 \frac{\th(z+r_1-2x+p)}{\th(z-2x)} & \frac{\th(z+r_2-2x+p)}{\th(z-2x)}
 \end{pmatrix}.\]
 The parabolic structure on $\mc{L}_{r_1}\oplus\mc{L}_{r_2}$ at zero by construction is given by the kernel of \[M(0)\colon (\mc{L}_{r_1}\oplus \mc{L}_{r_2})_0\to E_0.\]  It is generated by \[\begin{pmatrix}
 \th (r_2-2x+p)\\
 -\th(t_1-2x+p)
 \end{pmatrix}.\] Hence we should apply an automorphism of $\mc{L}_{r_1}\oplus\mc{L}_{r_2}$ that divides the first column of $M$ by $-\th(t_1-2x+p)$ and the second by $\th(r_2-2x+p)$. Since we need only the corresponding projective transformation, we can get rid of $\th(z-2x)$ to get
 \[M(z)=\begin{pmatrix}
 \frac{-\th(z+r_1-2x-p)}{\th(r_1-2x-p)} & \frac{\th(z+r_2-2x-p)}{\th(r_2-2x-p)}\\
 \frac{-\th(z+r_1-2x+p)}{\th(r_1-2x+p)} & \frac{\th(z+r_2-2x+p)}{\th(r_2-2x+p)}
 \end{pmatrix}.\]
 
 Note that $r_1+r_2=2x$. Hence the elliptic parameter is $r_1-x=q$, so that $r_2-x=-q$. We can rewrite $M(z)$ as
 \[\begin{pmatrix}
 \frac{-\th(z+q-x-p)}{\th(q-x-p)} & \frac{\th(z-q-x-p)}{\th(-q-x-p)}\\
 \frac{-\th(z+q-x+p)}{\th(q-x+p)} & \frac{\th(z-q-x+p)}{\th(-q+x-p)}
 \end{pmatrix}.\]
It is convenient to introduce $\tilde{\th}(z,a)=\frac{\th(z-a)}{\th(-a)}$, this will give
\[M(z)=\begin{pmatrix}
-\tilde{\th}(z,p+x-q) & \tilde{\th}(z,p+x+q)\\
-\tilde{\th}(z,-p+x-q) & \tilde{\th}(z,-p+x+q)
\end{pmatrix}.\] In order to write $z_i$ in terms of $y_i$ we should use the inverse projective transformation
\[M(z)^{-1}=\begin{pmatrix}
\tilde{\th}(z,-p+x+q) & -\tilde{\th}(z,p+x+q)\\
\tilde{\th}(z,-p+x-q) & -\tilde{\th}(z,p+x-q)
\end{pmatrix},\] this gives
\begin{equation}
\label{EqZiThroughYi}
z_i=\frac{\tilde{\th}(t_i,-p+x+q)y_i-\tilde{\th}(t_i,p+x+q)}{\tilde{\th}(t_i,-p+x-q)y_i-\tilde{\th}(t_i,p+x-q)}.
\end{equation}
\end{proof}
\section{Computation of the Hecke operator}
\subsection{General considerations.}
Sometimes instead of $p,\mb{y}=(y_1,\ldots,y_m)$ we will write $\mb{y}=(y_0,y_1,\ldots,y_m)$, where $y_0=p$, similarly with $z$.

The general definition of Hecke operators is given in~\cite{EFK2}, after Theorem 1.1. It uses the Hecke correspondence \[Z\subset \Bun_G\times \Bun_G\times (X\setminus \{t_0,\ldots,t_m\})\] and projection maps $q_1,q_2,q_3$ to $\Bun_G$, $\Bun_G$, $X$ respectively.

The definition of Hecke operators in~\cite{EFK2} uses an isomorphism \[\mathbf{a}\colon q_1^*(K_{\Bun})\to q_2^*(K_{\Bun})\otimes K_2^2\otimes q_3^*(K_X^{-1}),\] where $K$ denotes the canonical bundle, $K_2$ is the relative canonical bundle of $q_2\times q_3$. In our case $K_X$ is trivial, hence $q_3^*(K_X^{-1})$ is also trivial. 

\begin{rem}
In the notation of~\cite{EFK3}, $\mathbf{a}=a^2$.
\end{rem}

Hecke operators act on the space of square-integrable half-densities, it is defined as follows. Let $Y$ be a variety, $\mc{L}$ be a line bundle on $Y$. Consider the $\RN_{>0}$-principal bundle $\abs{\mc{L}}$. This is a smooth bundle obtained by applying $\abs{\cdot}$ to all transition functions. We also can think of it as a smooth complex line bundle. Similarly we can define any power of $\abs{\mc{L}}$. Set $\Omega_Y=\abs{K_Y}^2$, where $K_Y$ is the canonical bundle of $Y$. We say that $\Omega_Y$ is the bundle of {\it densities}. This means that for any section $f$ of $\Omega_Y$ the notion of being integrable and if so, the integral $\int_Y f$ are well-defined. Thus for any {\it half-density}, i.e. a section of $\abs{K_Y}=\Omega_Y^{\tfrac12}$, the number (possibly, infinite) $\norm{f}^2_{L^2}=\int_{Y}f\ovl{f}$ is well-defined. We define $L^2(Y)$ to be the set of all measurable sections $f$ of $\abs{K_Y}$ such that $\norm{f}_{L^2}$ is finite, modulo the equivalence relation $f\sim g$ if $f$ and $g$ coincide almost everywhere.

Taking $Y=\Bun_G$ we get the Hilbert space $\mc{H}=L^2(\Bun_G)$.

 We can take absolute value of $\mathbf{a}$ to get
\[\abs{\mathbf{a}}\colon q_1^*(\Omega_{\Bun}^{\tfrac12})\to q_2^*(\Omega_{\Bun}^{\tfrac12})\otimes \Omega_2.\] Here $\Omega_2=\abs{K_2}^2$, a relative bundle of densities of $q_2\times q_3$. For $x\in X$ let $\mathbf{a}_x$ be the restriction of $\mathbf{a}$ to $q_3^{-1}(x)$. Now we can take a half-density $f\in\mc{H}$, consider $q_1^*f$, send it to $\abs{\mathbf{a}_x}(q_1^*f)$, and then integrate over fibers of $q_2$ using invariant measure given by the $\Omega_2$ piece to obtain another half-density on $\Bun$. Denote the result by $H_x(f)$. A priori it is not clear whether the integral converges or gives a square-integrable half-density. We will show that $H_x(f)$ is well-defined.

More precisely, consider the subset of nonnegative square-integrable half-densities $S\subset \mc{H}$. Note that for any $f\in \mc{H}$ there exist $f_+,f_-\in S$ such that $f=f_+-f_-$ and $\norm{f}^2=\norm{f_+}^2+\norm{f_-}^2$. Since the integration measure for $H_x$ is positive, $H_x(f)$ can be defined for $f\in S$ as a nonnegative half-density that can be equal to $+\infty$ at some points. It is proved in subsection~\ref{SubSecBounded} that $H_x(f)$ has finite $L^2$-norm. In particular, $H_x(f)$ is finite almost everywhere. This proves that the integral defining $H_x(f)$ converges almost everywhere for $f\in S$. For $f\in \mc{H}$ we use $f=f_+-f_-$ to prove that $H_x(f)$ absolutely converges almost everywhere.

\begin{rem}
\begin{enumerate}
\item
Note that~\cite{EFK3} considers all local fields $F$ and uses the norm map $\|\cdot\|\colon F\to \RN_{>0}$ for uniform treatment of all cases. We work with the field of complex numbers, in this case $\|z\|=\abs{z}^2$. Hence all expressions of the form $\abs{z}^s$ in this paper correspond to $\norm{z}^{\frac{s}{2}}$ in~\cite{EFK3}.
\item
Our plan for defining $H_x$ rigorously is different from~\cite{EFK3}: it proves that there exists a dense subspace $V\subset\mc{H}$ such that the integral defining $H_x$ converges and the result is a bounded operator on $V$, hence it extends from $V$ to a bounded operator on $\mc{H}$.
\end{enumerate}

\end{rem}

Let us fix a point $x$ and restrict $\mathbf{a}$ to $q_3^{-1}(x)$. Denote the restriction by $\mathbf{a}_x$ and $q_3^{-1}(x)$ by $Z_x$. Sometimes we will write $\mathbf{a}$ and $Z$ instead of $\mathbf{a}_x$, $Z_x$. Since $Z_x$ is projective, any nontrivial map from $q_1^*(K_{\Bun})$ to $q_2^*(K_{\Bun})\otimes K_2^2$ is $\mathbf{a}_x$ times a number. We will produce such a map.

Tensoring the source and the target of $\mathbf{a}$ by $K_Z^{-1}$ we get a map \[\phi_x\colon T_1\to T_2\otimes K_2^2,\] where the fiber of $T_1$ is the tangent space to $q_1^{-1}(\mby)$, and similarly for $T_2$ and $q_2$. 


Fix $\mbz$. We note that $(q_2\times q_3)^{-1}(\mbz,x)$ is isomorphic to $\Pone$.  Hence $T_2$ becomes the tangent bundle on $\Pone$ and $K_2$ is by definition the canonical bundle on $(q_2\times q_3)^{-1}(\mbz,x)$. Denote the restriction of the isomorphism $\phi_x$ above by $\phi_{\mbz,x}\colon T_{1,\mbz,x}\to T_{2,\mbz,x}\otimes K_2^2$.  



Let us modify $Z_x$ so that it consists of triples $(E,F,l)$, where $l\in \mathbb{P}(F_x)\cong \Pone$ such that $E=HM_{x,l}F$. Then we have a map $\pi\colon Z_x\to \Pone$. Note that $\mathbb{P}(F_x)=q_2^{-1}(F)$.


Restricting $d\pi$ to $T_1$ we get a map $d\pi\colon T_1\to T_2$. Suppose that the restriction of this map to $q_2^{-1}(F)$ has $k$ zeroes at points $s_1,\ldots,s_k\in\Pone$. Realize $O(-k)$ as a bundle whose regular sections are regular functions with zeroes at $s_1,\ldots,s_k$. Then we have a rational map $\gamma$ of line bundles from $O$ to $O(-k)$ that sends a function to itself. Hence $d\pi\otimes \gamma$ is an isomorphism between $T_1$ and $T_2\otimes O(-k)$. Comparing this to the formula for $\phi_{\mbz,x}$ above and noticing that $K_{\Pone}^2=O(-4)$, we get that $k=4$. So we have to find four zeroes of $d\pi$.

Suppose that $E=E_{\mb{y}}$, $F=F_{\mb{z}^0}$, $l$ corresponds to $v_0\in\CN$. Let us write a local analytic parametrization of $q_1^{-1}(\mby,x)$ at $(\mby,\mbz,v)$: $(\mby,\mbz(t),v(t))$.  Here $t\in (-1,1)$, $v(0)=v^0$, $\mby=\psi(\mbz(t),v(t))$, $\mbz(0)=\mbz^0$. The fiber $T_{1,\mby,\mbz}$ of $T_1$ over $(\mby,\mbz)$ is generated by \[(0,\partial_t\mbz(0),\partial_t v(0))\subset T_{\mby} \Bun\times T_{\mbz} \Bun\times T_{\Pone}.\] 
Hence at the points where $\partial_t v(0)$ is nonzero, the map $d\pi$ is an isomorphism. At these points let $\mbz'_v:=\frac{\mbz'_t}{v'_t}$, where $\mbz'_t=\partial_t\mbz$, $v'_t=\partial_t v$. We can write $\mby=\psi(\mbz(v(t)),v(t))$. Since $\mby$ is constant, differentiating with respect to $t$ and setting $t=0$, we get \[v'_t(0)\cdot\big(\dd{z}\psi(\mbz(v_0),v_0)\mbz'_v(v_0)+\partial_v\psi(\mbz(v_0),v_0)\big)=0.\] Here by $\dd{z}\psi$ we mean the square matrix of partial derivatives. Since we assumed that $v'_t\neq 0$, we get that \[\mbz'_v(v_0)=(\dd{z}\psi(\mbz,v_0))^{-1}\partial_v \psi(\mbz,v_0).\] 

It follows that any value of $v_0$ for which at least one of the components of \[(\dd{\mbz}\psi(\mbz(v_0),v_0))^{-1}\partial_v\psi(\mbz(v_0),v_0)\] has a pole gives a zero of $d\pi$. We will find four such points, giving us four zeroes of $d\pi$. 
\subsection{Computing the poles of $d\pi$}
Let us interchange $y$ and $z$, so that we are finding the poles of the components of \[(\dd{\mby}\psi(\mby(v_0),v_0))^{-1}\dd{v}\psi(\mby(v_0),v_0),\] where $\mb{z}=\psi(\mb{y},v)$. We have $\psi(p,y_1,\ldots,y_m,v)=(q,z_1,\ldots,z_m)$, where $q$ depends on $p,v$ and $z_i$ depend on $y_i,p,v$. Hence \[\dd{\mby}\psi(\mby(v),v)=\begin{pmatrix}
q'_p & 0 & \ldots & 0\\
(z_1)'_p & (z_1)'_{y_1} & \ldots & 0\\
\vdots & \vdots & \ddots & \vdots\\
(z_m)'_p & 0 & \ldots & (z_m)'_{y_m}
\end{pmatrix},\] so that 
\[(\dd{\mby}\psi(\mby(v_0),v_0))^{-1}\dd{v}\psi(\mby(v_0),v_0)=\begin{pmatrix}
\frac{q'_v}{q'_p}\\
\frac{(z_1)'_v-\frac{q'_v}{q'_p}(z_1)'_p}{(z_1)'_{y_1}}\\
\vdots\\
\frac{(z_m)'_v-\frac{q'_v}{q'_p}(z_m)'_p}{(z_m)'_{y_m}}
\end{pmatrix}.\]

We have $\frac{\th(p+q-x)\th(q+x-p)}{\th(q-x-p)\th(q+x+p)}=v$. Denote the left-hand side by $f(p,q,x)$. We have $f'_qq'_v=1$, so that $q'_v=\frac{1}{f'_q}$ and $f'_qq'_p+f'_p=0$, so that $q'_p=-\frac{f'_p}{f'_q}$. We get $\frac{q'_v}{q'_p}=-\frac{1}{f'_p}$. The poles of $-\frac{1}{f'_p}$ are the roots of $f'_p$. The function $f'_p$ is an elliptic function in $q$ with double poles at $q=\pm(x+p)$. Hence there exists a polynomial in $v$ of degree two, $P(v)=P(v,p,x)$, such that $f'_p=P(f)$. It follows that $P(v)$ has roots at exactly two points corresponding to the poles of $\frac{q'_v}{q'_p}$.

Now we turn to the poles corresponding to $z_i$, where $i$ is an integer from $1$ to $m$. Let $z_i:=z_i(p,q(p,v),y_i)$. Then $(z_i)'_{p,\text{old}}=(z_i)'_{p,\text{new}}+(z_i)'_qq'_p$ and $(z_i)'_{v,\text{old}}=(z_i)'_qq'_v$. Then \[\frac{q'_v}{q'_p}(z_i)'_{p,\text{old}}=\frac{(z_i)'_{p,\text{new}}q'_v}{q'_p}+(z_i)'_qq'_v.\] Hence \[(z_i)'_{v,\text{old}}-\frac{q'_v}{q'_p}(z_i)'_{p,\text{old}}=-\frac{(z_i)'_{p,\text{new}}q'_v}{q'_p}.\]

The function $z_i$ is given by equation~\eqref{EqZiThroughYi}: \[z_i=\frac{\tilde{\th}(t_i,-p+x+q)y_i-\tilde{\th}(t_i,p+x+q)}{\tilde{\th}(t_i,-p+x-q)y_i-\tilde{\th}(t_i,p+x-q)}.\]
For any function $z=\frac{A(p)y+B(p)}{C(p)y+D(p)}$ we have \[z'_p=\frac{(A'y+B')(Cy+D)-(Ay+B)(C'y+D')}{(Cy+D)^2}\] and \[z'_y=\frac{AD-BC}{(Cy+D)^2},\] hence \[\frac{z'_p}{z'_y}=\frac{(A'y+B')(Cy+D)-(Ay+B)(C'y+D')}{AD-BC}.\] The poles of this expression come from the poles of $A,B,C,D$ and the zeroes of $AD-BC$.

Take $A,B,C,D$ as above. The poles of $A,B,C,D$ are $q=\pm p\pm x$, this gives $v=0,\infty$. As a function of $t_i$, $AD-BC$ has zeroes at $t_i=0,2x$ and no poles, hence $AD-BC=L(p,q,x)\th(t_i)\th(t_i-2x)$. At $t_i=p+x+q$ we have \[\tilde{\th}(t_i,\eps_1 p+x+\eps_2 q)=\frac{\th(t_i-\eps_1p-x-\eps_2q)}{\th(-\eps_1p-x-\eps_2q)}=\frac{\th((1-\eps_1)p+(1-\eps_2)q)}{\th(-\eps_1p-x-\eps_2q)},\] where $\eps_1=\pm 1$ and $\eps_2=\pm 1$. Since $B(p+x+q)=0$, we have 
\[\frac{-\th(2p)\th(2q)}{\th(p-x-q)\th(q-x-p)}=L(p,q,x)\th(p+q+x)\th(p+q-x),\]
so that \[L(p,q,x)=\frac{-\th(2p)\th(2q)}{\th(p-x-q)\th(q-x-p)\th(p+q+x)\th(p+q-x)}.\]

Hence the zeroes of $AD-BC$ correspond to $q$ such that $2q=0$. The corresponding bundle has group of automorphisms of dimension at least three. Since $\mc{L}_p\oplus\mc{L}_{-p}$ has the group of automorphisms $\CN^{\times}\times \CN^{\times}$, the set of bundles with $2q=0$ has measure zero.

It follows that $\mathbf{a}$ is given by $d\pi\otimes \frac{(dv)^2}{vP(v)}$. Hence the integration measure is $\frac{dv\ovl{dv}}{\abs{vP(v)}}$. It can also be shown that the remaining piece is the square root of Jacobian, namely $\abs{\det\dd{\mb{y}}\mb{z}}$. In the expression $\mb{z}=\mb{z}(\mb{y},v)$ the components on $\mb{z}$ depend on the components of $\mb{y}$ as follows: $q$ depends on $p,v$, and $z_i$ depends on $y_i,p,v$. Hence the determinant equals to the product of the diagonal elements: \[\det\dd{\mb{y}}{\mb{z}}=q'_p\cdot\prod_{i=1}^m \frac{\partial}{\partial y_i} z_i(p,q,y_i).\] Taking out $q'_p$, we can write 
\begin{equation}
\label{EqHeckeOperatorBeforeCoordinateChange}
H_xf(\mb{y})=\intl_{\C}\prod_{i=1}^m \big|\frac{\partial}{\partial y_i} z_i(p,q,y_i)\big|f\big(q(p,v),\mb{z}\big)\frac{\abs{q'_p}dv\ovl{dv}}{\abs{vP(v)}},
\end{equation} where $z_i$ is given by~\eqref{EqZiThroughYi}.
\subsection{Change of variable $v=f(q)$.}
In this subsection we will rewrite the equation~\eqref{EqHeckeOperatorBeforeCoordinateChange} in a more convenient form. Let us introduce some notation. Let $K(p,q,x)=\mathbf{K}(\wp(p),\wp(q),\wp(x))$, where $\mathbf{K}$ is the kernel for Hecke operator on $\Pone$ with four parabolic points given by the formula in~\cite{EFK3}, Proposition 5.5 in the case $F=\C$. Let $\mb{y}=(y_1,\ldots,y_m)$. Let $\mb{z}=(z_1,\ldots,z_m)$ be given by~\eqref{EqZiThroughYi}. Suppose that $g_{x,p,q,i}$ is an element of $\PGL_2$ corresponding to the projective transformation $z_i(y_i)$, that is,
 \[g_{x,p,q,i}=\begin{pmatrix}
\tilde{\th}(t_i,-p+x+q) & -\tilde{\th}(t_i,p+x+q)\\
\tilde{\th}(t_i,-p+x-q) & -\tilde{\th}(t_i,p+x-q)
\end{pmatrix},\] here $\tilde{\th}(z,a)=\frac{\th(z-a)}{\th(-a)}$. Let $g_{x,p,q}=(g_{x,p,q,1},\cdots,g_{x,p,q,m})$ be an element of $\PGL_2^m$, $\rho$ be the standard representation of $\PGL_2^m$ on $L^2(\Pone)^{\otimes m}$. Then we have the following theorem:
\begin{thr}
\label{ThrGroupAndKernelHeckeOperators}
 Then the Hecke operator can be written as \begin{multline}
\label{EqGroupAndKernelHeckeOperator}
(H_xf)(p,\mb{y})=\intl_{\C}\prod_{i=1}^m \big|\frac{\partial}{\partial y_i} z_i(p,q,y_i)\big|f\big(q,\mb{z}\big)K(p,q,x)dq=\\
=\intl_{\C}\big(\rho(g_{x,p,q})f\big)(q,\mb{y})K(p,q,x)dq.
\end{multline} In particular, for $m=0$, $H_x$ is unitarily equivalent to the Hecke operator on $\Pone$ with four parabolic points.
\end{thr}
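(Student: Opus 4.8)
The geometric datum $\mathbf{a}$ has already been identified in the previous subsection as $d\pi\otimes\frac{(dv)^2}{vP(v)}$, and the problem reduced to formula~\eqref{EqHeckeOperatorBeforeCoordinateChange}, in which the half-density bookkeeping is encoded in the prefactor $\prod_{i=1}^m|\partial_{y_i}z_i|$ and the factor $|q'_p|$ (together these give the square root of the Jacobian $\det\partial_{\mb{y}}\mb{z}$). So what remains is a change of variables followed by a repackaging of the integrand. The plan is to substitute $v=f(p,q,x)$, where $f(p,q,x)=\frac{\th(q-x+p)\th(q+x-p)}{\th(q-x-p)\th(q+x+p)}$ is the degree-two elliptic function in $q$ from the proposition computing $S_0\circ HM_{2x,l}E$. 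Using the relations derived there, $q'_v=1/f'_q$, $q'_p=-f'_p/f'_q$ and $f'_p=P(f)$, together with $dv\,\overline{dv}=|f'_q|^2\,dq\,\overline{dq}$, the scalar part of the measure becomes
\[
\frac{|q'_p|\,dv\,\overline{dv}}{|vP(v)|}
=\frac{\bigl|f'_p/f'_q\bigr|\;|f'_q|^2}{|f|\;|P(f)|}\,dq\,\overline{dq}
=\frac{|f'_q(p,q,x)|}{|f(p,q,x)|}\,dq\,\overline{dq}.
\]
Since $q\mapsto v=f(p,q,x)$ is two-to-one with deck involution $q\mapsto -q$, and the integrand $\prod_{i=1}^m|\partial_{y_i}z_i|\,f(q,\mb{z})$ is invariant under $q\mapsto -q$ (by the remark after that proposition, $q\mapsto -q$ sends $z_i\mapsto z_i^{-1}$, while half-densities on $\Bun_0$ are $C_2$-invariant), the integral over $v\in\C$ passes to an integral over a fundamental domain of $X=\C/\Lambda$, the factor $2$ being absorbed into the normalization of $dq$.

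It then remains to identify $\frac{|f'_q(p,q,x)|}{|f(p,q,x)|}$ with the kernel $K(p,q,x)=\mathbf{K}(\wp(p),\wp(q),\wp(x))$. I would do this by noting that, for $m=0$, the moduli space $\Bun_0$ with its Hecke correspondence is carried by $\wp\colon X\to\Pone$ to the moduli space with Hecke correspondence for $\Pone$ with the four parabolic points $\wp(\tfrac12),\wp(\tfrac\tau2),\wp(\tfrac{1+\tau}2),\infty$ — the images of the $2$-torsion of $X$, which are exactly the points where $\mc{L}_p\oplus\mc{L}_{-p}$ acquires extra automorphisms and the parametrization degenerates (cf.\ the vanishing locus of $AD-BC$ above). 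Under this isomorphism the genus-one datum $\mathbf{a}$ restricted to $m=0$ goes to the four-punctured $\Pone$ datum, so the scalar kernels must coincide up to the overall constant fixed by conventions; this gives $\frac{|f'_q(p,q,x)|}{|f(p,q,x)|}=\mathbf{K}(\wp(p),\wp(q),\wp(x))$. Alternatively the same identity can be checked by hand, converting the above theta-function expression into the Weierstrass-$\wp$ form of the kernel in~\cite{EFK3}, Proposition~5.5. Adding parabolic points at $t_1,\dots,t_m$ changes $\mathbf{a}$ only through the factor in the variables $y_1,\dots,y_m$, so the scalar kernel is $K(p,q,x)$ for all $m$; this establishes the first equality in~\eqref{EqGroupAndKernelHeckeOperator}.

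For the second equality: by~\eqref{EqZiThroughYi} each $z_i$ is the Möbius transform of $y_i$ with matrix $g_{x,p,q,i}$, so $\prod_{i=1}^m|\partial_{y_i}z_i(p,q,y_i)|$ is precisely the factor by which a half-density on $(\Pone)^m$ is multiplied under the action of $g_{x,p,q}=(g_{x,p,q,1},\dots,g_{x,p,q,m})\in\PGL_2^m$ (recall that in our conventions $|z|^s$ stands for $\|z\|^{s/2}$, so this Jacobian enters to the correct power). Hence $\prod_{i=1}^m|\partial_{y_i}z_i|\,f(q,\mb{z})=\bigl(\rho(g_{x,p,q})f\bigr)(q,\mb{y})$, with $\rho$ acting only in the variables $y_1,\dots,y_m$ and the first slot simply evaluated at $q$. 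Finally, for $m=0$ the tuple $g_{x,p,q}$ is empty, $\rho$ is trivial, and $\Bun_0$ is identified with $\Pone$ via $\wp$; so $H_x$ is carried by the unitary $L^2(\Bun_0)\xrightarrow{\sim}L^2(\Pone)$ induced by $\wp$ to the operator on $L^2(\Pone)$ with kernel $\mathbf{K}(\wp(p),\wp(q),\wp(x))$, namely the Hecke operator on $\Pone$ with four parabolic points.

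The step I expect to be the main obstacle is the kernel identification: either making the isomorphism between the genus-one one-point moduli problem and the four-punctured $\Pone$ problem precise at the level of the datum $\mathbf{a}$, or carrying out the explicit theta-function manipulation that matches $\tfrac{|f'_q|}{|f|}$ with the $\Pone$-kernel of~\cite{EFK3}. By contrast, the change of variables and the recognition of the $\PGL_2^m$-action are routine once the normalization conventions are pinned down.
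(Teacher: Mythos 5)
Your overall strategy is exactly the paper's: substitute $v=f(p,q,x)$ in~\eqref{EqHeckeOperatorBeforeCoordinateChange}, and your computation of the resulting measure $\bigl|\tfrac{f'_q}{f}\bigr|\,dq\,\overline{dq}$ via $q'_p=-f'_p/f'_q$ and $f'_p=P(f)$ matches the paper line for line. Your reading of the second equality is also right: $\prod_i|\partial_{y_i}z_i|$ is the half-density Jacobian of the M\"obius action of $g_{x,p,q,i}$ on the $y_i$-variables, so the integrand repackages as $\rho(g_{x,p,q})f$; the paper treats this as immediate from the definitions of $g_{x,p,q}$ and $\rho$ given just before the theorem. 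Your remark about the two-to-one covering $q\mapsto f(q)$ and the $C_2$-invariance is a point the paper glosses over, and it is handled correctly.

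The genuine gap is the step you yourself flag: identifying $\bigl|\tfrac{f'_q}{f}\bigr|$ with $K(p,q,x)=\mathbf{K}(\wp(p),\wp(q),\wp(x))$. This is not a side issue — it is essentially the entire content of the paper's proof, and neither of your two proposed routes is carried out. The paper does it by direct elliptic-function computation: writing $(\log f)'_q=Z(p+q-x)+Z(q+x-p)-Z(q-x-p)-Z(q+x+p)$ with $Z=(\log\th)'$, observing that this expression is symmetric in $p,q,x$; then matching zeroes, poles and residues to get $(\log f)'_q=\frac{(\wp(p-x)-\wp(p+x))\wp'(q)}{(\wp(q)-\wp(p-x))(\wp(q)-\wp(p+x))}$; then using $\wp(p-x)-\wp(p+x)=\frac{\wp'(p)\wp'(x)}{(\wp(p)-\wp(x))^2}$ and a coefficient-by-coefficient computation (Taylor expansion at $p=0$, evaluation at $p=x$, the duplication formula) to exhibit the denominator as the explicit symmetric polynomial $r^2s^2+r^2t^2+s^2t^2-2rst(r+s+t)+\frac{g_2}{2}(rs+st+rt)+g_3(r+s+t)+\frac{g_2^2}{16}$ in $r=\wp(q)$, $s=\wp(p)$, $t=\wp(x)$, which after a shift is the Kontsevich/EFK3 polynomial. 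The symmetry of $(\log f)'_q$ in $p,q,x$ is the key structural observation that makes this tractable, and it is absent from your sketch. Your alternative ``conceptual'' route — that $\wp$ carries the genus-one one-point Hecke correspondence to the four-punctured $\Pone$ one, with the four points the images of the $2$-torsion — is an attractive idea but is not what the paper does, and making it precise at the level of the datum $\mathbf{a}$ (including why the Hecke modification at $x$ on $X$ matches the one at $\wp(x)$ on $\Pone$) would be a nontrivial argument in its own right rather than a shortcut.
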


\begin{proof}

Recall that $v=f(p,q,x)$. The function $f(q):=f(p,q,x)$ is an elliptic function with two poles, hence $q\mapsto f(q)$ is a two-to-one surjective map. We will make the change of variable $v=f(q)$ in the integral~\eqref{EqHeckeOperatorBeforeCoordinateChange}. The measure \[\frac{\abs{q'_p}dv\ovl{dv}}{\abs{vP(v)}}\] becomes
\[\frac{\abs{q'_p}\abs{f'(q)}^2dqd\ovl{q}}{\abs{ff'_p}}.\] Note that $q'_p=\frac{-f'_p}{f'_q}$, so that the measure is 
\[\bigg|\frac{f'_q}{f}\bigg|dq\ovl{dq}.\]
Recall that $f(p,q,x)=\frac{\th(p+q-x)\th(q+x-p)}{\th(q-x-p)\th(q+x+p))}$. Let $Z(t)=(\log\th(t))'$, this is an odd function in $t$ satisfying $Z(t+1)=Z(t)$, $Z(t+\tau)=Z(t)-2\pi i$. Then \[(\log f)'_q=Z(p+q-x)+Z(q+x-p)-Z(q-x-p)-Z(q+x+p).\] Note that $(\log f)'_q$ is an odd function of $q$ with poles at $q=\pm p\pm x$. Hence we can write \[(\log f)'_q=\frac{C\wp'(q)}{(\wp(q)-\wp(p-x))(\wp(q)-\wp(p+x))},\] since both sides have the same set of zeroes and poles. Here $C$ does not depend on $q$. Note that the residues on both sides at $q=p-x$ are $1$ and $\frac{C\wp'(p-x)}{\wp'(p-x)(\wp(p-x)-\wp(p+x))}$, hence $C=\wp(p-x)-\wp(p+x)$.

Note that \begin{multline*}
(\log f)'_q=Z(p+q-x)+Z(q+x-p)-Z(q-x-p)-Z(q+x+p)=\\
Z(p+q-x)+Z(q+x-p)+Z(x+p-q)-Z(q+x+p)
\end{multline*}
is symmetric in $p,q,x$. Let us express \[\frac{\wp(p-x)-\wp(p+x)}{(\wp(q)-\wp(p-x))(\wp(q)-\wp(p+x))}\] as $\wp'(p)\wp'(x)$ over a polynomial in $\wp(p)$, $\wp(x)$. Let $r=\wp(q)$, $s=\wp(p)$, $t=\wp(x)$.

Counting zeroes and poles we have \[\wp(p-x)-\wp(p+x)=\frac{\wp'(p)\wp'(x)}{(\wp(p)-\wp(x))^2}.\] Hence we are interested in \begin{multline*}(\wp(p)-\wp(x))^2(\wp(q)-\wp(p-x))(\wp(q)-\wp(p+x))=\\
(s-t)^2(r-\wp(p-x))(r-\wp(p+x)).\end{multline*} The coefficient of $r^2$ is $(s-t)^2$. 

The coefficient of $r$ is \begin{equation}
\label{EqCoeffOnR}
(\wp(p-x)+\wp(p+x))(\wp(p)-\wp(x))^2=A\wp(p)^2+B\wp(p)+C
\end{equation} for some $A,B,C$. Setting $p=0$ we get $A=2\wp(x)$. We can compute more terms of the Taylor expansion at zero: \[\wp(p-x)+\wp(p+x)=2\wp(x)+p^2\wp''(x)+O(p^3)\] \[\wp(p)-\wp(x)=p^{-2}-\wp(x)+O(p^2).\] Hence \[(\wp(p-x)+\wp(p+x))(\wp(p)-\wp(x))^2=2\wp(x)p^{-4}+(\wp''(x)-4\wp(x)^2)p^{-2}+O(p^{-1}).\] It follows that $B=\wp''(x)-4\wp(x)^2$. Since $\wp'(x)^2=4\wp(x)^3-g_2\wp(x)-g_3$, we get $\wp''(x)=6\wp(x)^2-\frac{g_2}{2}$, so that $B=2\wp(x)^2-\frac{g_2}{2}$. Setting $p=x$ in~\eqref{EqCoeffOnR} gives us \[\wp'(x)^2=A\wp(x)^2+B\wp(x)+C,\] hence \begin{multline*}
C=\wp'(x)^2-A\wp(x)^2-B\wp(x)=\\
4\wp(x)^3-g_2\wp(x)-g_3-2\wp(x)^3-2\wp(x)^3+\frac{g_2}{2}\wp(x)=\\
\frac{-g_2}{2}\wp(x)-g_3.
\end{multline*}

It follows that \begin{multline}
\label{EqSecondExpressionForPolynomial}
(\wp(p)-\wp(x))^2(\wp(q)-\wp(p-x))(\wp(q)-\wp(p+x))=\\
(s-t)^2r^2-(2ts^2+2t^2s-\frac{g_2}{2}(t+s)-g_3)r+(\wp(p)-\wp(x))^2\wp(p-x)\wp(p+x).
\end{multline} We know that this expression is symmetric in $r,s,t$, hence the constant coefficient is $s^2t^2+\frac{g_2}{2}st+g_3(s+t)+C_0$, where $C_0$ is a number.

Using~\eqref{EqSecondExpressionForPolynomial} we get \[(\wp(p)-\wp(x))^2\wp(p-x)\wp(p+x)=\wp(x)^2\wp(p)^2+(\frac{g_2}{2}\wp(x)+g_3)\wp(p)+g_3\wp(x)+C_0.\] Setting $p=x$ we obtain \[\wp'(x)^2\wp(2x)=\wp(x)^4+\frac{g_2}{2}\wp(x)^2+2g_3\wp(x)+C_0.\]  We have the duplication formula \[\wp(2x)=\frac{1}{4}\bigg(\frac{\wp''(x)}{\wp'(x)}\bigg)^2-2\wp(x),\] so that \begin{multline*}
\wp'(x)^2\wp(2x)=\frac{1}{4}\wp''(x)^2-2\wp'(x)^2\wp(x)=\\
\frac{1}{4}\big(6\wp(x)^2-\frac{g_2}{2}\big)^2-2\big(4\wp(x)^3-g_2\wp(x)-g_3\big)\wp(x)=\\\
\wp(x)^4+\frac{g_2}{2}\wp(x)+2g_3\wp(x)+\frac{g_2^2}{16}
\end{multline*}
It follows that $C_0=\frac{g_2^2}{16}$. So the polynomial is
\[ r^2s^2+r^2t^2+s^2t^2-2rst(r+s+t)+\frac{g_2}{2}(rs+st+rt)+g_3(r+s+t)+\frac{g_2^2}{16}.\]

It can be checked that after a shift this polynomial coincides with the polynomial from~\cite{EFK3}, section 5 and~\cite{Ko}, page 3.
Combining this computation with~\eqref{EqHeckeOperatorBeforeCoordinateChange} finishes the proof.
\end{proof}
\section{Basic properties of Hecke operators}
\subsection{Hecke operators are bounded}
\label{SubSecBounded}
Boundedness of the Hecke operator in the case of one marked point ($m=0$) follows from unitary equivalence with Hecke operators for four marked points on $\Pone$ which is shown in~\cite{EFK3}, Section 5. The case $m>0$ is done as follows. We have \[H_x(f)(p,y_1,\ldots,y_m)=\int_{X}U_{p,q}f(q)(\mb{y})K(p,q,x)dq,\] where $U_{p,q}$ is a unitary operator in the last $m$ variables corresponding to the coordinate change~\eqref{EqZiThroughYi} and $K$ is the kernel for the Hecke operator in the case $m=0$. We have
\begin{multline*}
(H_x(f),g)=\int_{p,\mb{y}}H_xf(p,\mb{y})g(p,\mb{y})dp d\mb{y}=\\
\int_{p,q,\mb{y}}U_{p,q}f(q)(\mb{y})g(p,\mb{y})K(p,q,x)dp dq d\mb{y}\leq\\
\int_{p,q}\|U_{p,q}f(q)\|_{L^2}\|g(p)\|_{L^2}K(p,q,x)dp dq.
\end{multline*}
Here by $\|\cdot\|_{L^2}$ we mean $L^2$ norm in the last $m$ variables. We used the Cauchy-Schwarz inequality in the last line. We have \[\norm{U_{p,q}f(q)}_{L^2}=\norm{f(q)}_{L^2}=:F(q).\] The function $F$ belongs to $L^2(X)$ and satisfies $\norm{F}=\norm{f}$. Similarly, $\norm{g(p)}_{L^2}=G(p)\in L^2(X)$, and $\norm{G}=\norm{g}$. Then \[(H_x(f),g)\leq (H^0_x(F),G)\leq\norm{H^0_x}\norm{F}\norm{G}=\norm{H^0_x}\norm{f}\norm{g},\] where $H^0_x$ is the Hecke operator for $m=0$. Hence $H_x$ is bounded with $\norm{H_x}\leq\norm{H^0_x}$.
\subsection{Hecke operators are self-adjoint and commute with each other}
We have \[H_x^*f(p,y_1,\ldots,y_m)=\int_{X}U_{q,p}^{-1}f(q)K(q,p,x)dq.\] Using~\eqref{EqZiThroughYi} we see that $U_{q,p}^{-1}=U_{p,q}$. Since $K(q,p,x)=K(p,q,x)$, we have $H_x^*=H_x$.

Let $x,y$ be points on a curve. For any $E=\mc{L}_p\oplus\mc{L}_{-p}$ and any $l_x\in E_x$, $l_y\in E_y$ we have $HM_{y,l_y}HM_{x,l_x}E\cong HM_{y,l_y}HM_{x,l_x}E$. Suppose that $l_x$, $l_y$ are parametrized by $v,w\in\Pone$. Then $l_y\in (HM_{x,l_x}E)_y$ is parametrized by \[W=\frac{\tilde{\th}(y,-p+x+q)w-\tilde{\th}(y,p+x+q)}{\tilde{\th}(y,-p+x-q)w-\tilde{\th}(y,p+x-q)}.\]
Writing $HM_{y,l_y}HM_{x,l_x}=HM_{x,l_x}HM_{y,l_y}$ in coordinates gives $U_{y,W}U_{x,v}=U_{x,V}U_{y,w}$, where $U$ is the unitary operator corresponding to the Hecke modification and
\[V=\frac{\tilde{\th}(x,-p+y+q)v-\tilde{\th}(x,p+y+q)}{\tilde{\th}(x,-p+y-q)v-\tilde{\th}(x,p+y-q)}.\]
 We have
 \begin{multline*}
 H_xH_yf=\int_{\Pone}U_{x,v}(H_yf)d\mu(v,p)=\\
 \int_{\Pone\times\Pone}U_{x,v}U_{y,w}fd\mu(w,q(v,p))d\mu(v,p)=\\
 \int_{\Pone\times\Pone}U_{x,V}U_{y,w}fd\mu(w,q(v,p))V'_vd\mu(V,p)
 \end{multline*}
 The operator $U_{x,V}U_{y,w}$ does not change after interchanging $x,y$ and $u,v$ simultaneously and it can be checked that same is true for the measure $d\mu(w,q(v,p))V'_vd\mu(V,p)$.
\subsection{Hecke operators are compact}
We know from~\cite{EFK3} that $H_x^0$ is compact and we want to extend this to the case $m>0$. We will do it in several steps.

 Since $H_x^0$ is compact, it can be approximated by finite rank operators $H_x^{0,k}$ with kernel given by a finite sum $K_k=\sum_l g_l(p,x)h_l(q,x)$, where $g_l,h_l$ belong to $L^2$. The $K_k$ can be chosen to be symmetric in $p,q$. Let $H_{x,k}$ be the operator given by $K_k$ instead of $K$ in the formula~\eqref{EqGroupAndKernelHeckeOperator}. Reasoning as in Subsection~\ref{SubSecBounded}, we see that $H_{x,k}$ tends to $H_x$ in norm. Hence it is enough to prove that $H_{x,k}$ is compact. So we can assume that $H_x=H_{x,k}$ has finite rank kernel $K=K_k$ in the formula~\eqref{EqGroupAndKernelHeckeOperator}.

Since $H_x$ is self-adjoint, it is enough to prove that $H_x^n$ is compact for some power $n$.
 We have 
\begin{multline}
\label{EqHxToN}
H_x^n(f)=\int_{p_1,\ldots,p_n}U_{p,p_1}U_{p_1,p_2}\cdots U_{p_{n-1},p_n}f(p_n,-)\\
 K(p,p_1,x)K(p_1,p_2,x)\cdots K(p_{n-1},p_n,x) dp_1\cdots dp_n.
 \end{multline}
Note that each $U_{p_i,p_{i+1}}$ is the action of a group element $g(p_i,p_{i+1})$.
 \begin{lem}
 \label{LemMapIsDominant}
 For large enough $n$ for any $p,p_n\in X$, the map $\phi\colon X^{n-1}\to G$ given by $\phi(p_1,\ldots,p_{n-1})= g(p,p_1)\cdots g(p_{n-1},p_n)$ is dominant.
 \end{lem}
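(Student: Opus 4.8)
The plan is to reduce the statement to a dimension count plus an irreducibility argument. Since $G=\PGL_2^m$ has dimension $3m$ and $X^{n-1}$ has (complex) dimension $n-1$, the map $\phi$ can only be dominant once $n-1\geq 3m$, so from the outset I take $n$ large, say $n\geq 3m+2$. The key structural observation is that $\phi$ factors through the multiplication map: writing $g(p_{i-1},p_i)$ for the $m$-tuple of $\PGL_2$-elements appearing in the iterated integral~\eqref{EqHxToN}, the image of $\phi$ is exactly the set of products $g(p,p_1)g(p_1,p_2)\cdots g(p_{n-1},p_n)$ with the endpoints $p,p_n$ fixed. So it suffices to understand, for each of the $m$ coordinates, the subvariety of $\PGL_2$ swept out by such products and to show that the joint sweep fills $\PGL_2^m$.

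First I would analyze a single factor. From the explicit formula
\[
g_{x,p,q,i}=\begin{pmatrix}
\tilde{\th}(t_i,-p+x+q) & -\tilde{\th}(t_i,p+x+q)\\
\tilde{\th}(t_i,-p+x-q) & -\tilde{\th}(t_i,p+x-q)
\end{pmatrix}
\]
one sees that as $q$ varies over $X$ with $p$ fixed, $q\mapsto g_{x,p,q,i}$ traces a curve $C_{p,i}\subset \PGL_2$ that is not contained in any one-parameter subgroup or coset thereof; in particular the Zariski closure of $\{g_{x,p,q,i}g_{x,q,p,i}^{\pm1}\colon q\}$ (or of a two-step product $g(p,q)g(q,p')$) is not contained in a proper algebraic subgroup. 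The clean way to package this is: let $H_i\subseteq \PGL_2$ be the Zariski closure of the set of all finite products $\prod g_{x,p_{j-1},p_j,i}$ over all lengths and all intermediate points with fixed ends — this $H_i$ is a subgroup (closure of a subsemigroup in an algebraic group that is also stable under the relevant inverses, using that $HM$ is an involution up to twist, cf.~\eqref{EqHMSquaredIsId}), and one checks $H_i=\PGL_2$ by exhibiting enough elements: the curve $C_{p,i}$ together with conjugates of it already generate, since a proper subgroup of $\PGL_2$ is either solvable (contained in a Borel) or finite (dihedral or $A_4,S_4,A_5$) or $\SO_3$-type, and the explicit theta-quotient entries visibly escape all of these for generic $t_i$. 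Then, because the $t_i$ are distinct points, the curves $C_{p,i}$ for different $i$ are ``independent'': the combined map $q\mapsto (g_{x,p,q,1},\ldots,g_{x,p,q,m})$ into $\PGL_2^m$ is not contained in any product of proper subgroups nor in the graph of any isogeny between factors, because such a constraint would force a nontrivial algebraic relation among $\tilde\th(t_i,\cdot)$ for distinct $t_i$, which fails.

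Next I would run the standard increasing-dimension argument. Let $V_k\subseteq G$ be the Zariski closure of the image of $(p_1,\ldots,p_k)\mapsto g(p,p_1)\cdots g(p_k,p_{k+1})$ for fixed ends; each $V_k$ is irreducible (image of an irreducible variety) and $V_{k+1}\supseteq V_k\cdot C$ where $C$ is a translate of one of the curves above, so $\dim V_{k+1}\geq \dim V_k$ with strict inequality as long as $V_k\neq G$: if $V_k\cdot C=V_k$ for a curve $C$ through $1$ generating $G$, then $V_k$ is stable under the subgroup generated by $C$, i.e. $V_k=G$. Hence $\dim V_k$ strictly increases until it reaches $3m=\dim G$, at which point $V_k=G$ since $G$ is irreducible; this happens for some $k\leq 3m$, so for all $n\geq 3m+1$ the map $\phi$ is dominant. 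I would state this uniformly in the endpoints $p,p_n$ by noting the construction is algebraic in those parameters and dominance is an open condition, or simply by absorbing the endpoints into one extra step.

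The main obstacle is the single-factor (and then joint) non-degeneracy claim: proving that the curve $q\mapsto g_{x,p,q,i}$ is not trapped in a proper subgroup of $\PGL_2$, and that the $m$ curves are jointly non-degenerate in $\PGL_2^m$. The cleanest route is the classification of closed subgroups of $\PGL_2(\C)$: rule out the Borel case by checking the curve has no common eigenvector (the entries as functions of $q$ are linearly independent theta-quotients), rule out the finite and $\PSO_3$ cases by a dimension/continuity argument (the curve is positive-dimensional and not compact-real-form-contained). For the joint statement, the obstruction to $\PGL_2^m$ being filled would be a proper subgroup of $\PGL_2^m$ projecting onto each factor, i.e. by Goursat a graph gluing some factors by isogenies; this is excluded because an isomorphism $\PGL_2\to\PGL_2$ carrying $C_{p,i}$ to $C_{p,j}$ would identify the theta data at $t_i$ with that at $t_j$, impossible for $t_i\neq t_j$. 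Everything else is the routine increasing-dimension induction.
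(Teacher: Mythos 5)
Your proposal takes a genuinely different route at the one step that carries all the content. The second half of your argument --- the increasing-dimension induction on the Zariski closures $V_k$ of the product varieties, terminating at $G$ because $G$ is irreducible --- is essentially the same device the paper uses (it is the argument of \cite{EFK3}, Lemma 8.9, which the paper cites), modulo a small inaccuracy: $V_{k+1}$ is not literally $V_k\cdot C$ for a fixed curve $C$, since the intermediate endpoint $p_{k+1}$ enters both the last factor of $V_k$ and the new factor; the standard fix, which the paper adopts, is to work with the genuine sub\emph{semigroup} of based loops $g(p_0,p_1)\cdots g(p_k,p_0)$, whose Zariski closure is automatically a subgroup. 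Where you diverge is in proving that this subgroup is all of $\PGL_2^m$. The paper gets this for free from geometry: by Drinfeld--Simpson every $G$-bundle on a non-compact curve is trivial, so iterated Hecke modifications at $x$ connect any two points of $\Bun_0$; in coordinates this says the generated subgroup $G_0$ acts transitively on $(\Pone)^m$, and a closed subgroup of $\PGL_2^m$ transitive on $(\Pone)^m$ must be everything (a Goursat-type check, since a graph of an automorphism of $\PGL_2$ is not transitive on $\Pone\times\Pone$). You instead propose to verify generation directly from the theta-function formula for $g_{x,p,q,i}$, via the classification of proper closed subgroups of $\PGL_2$ plus Goursat for the product.

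That alternative is legitimate in principle and would make the lemma self-contained, but the two claims you yourself label ``the main obstacle'' are exactly the content of the lemma and remain assertions in your sketch: (i) that the curve $q\mapsto g_{x,p,q,i}$ is not contained in (a coset of) a Borel or a normalizer of a torus --- note that your ``$\SO_3$-type'' case does not occur, since $\mathrm{PSU}(2)$ is not Zariski closed and only Zariski-closed proper subgroups are relevant, while $\SO_3(\C)\cong\PGL_2(\C)$; and (ii) that no fixed inner automorphism of $\PGL_2$ carries the curve at $t_i$ to the curve at $t_j$. Claim (ii) in particular amounts to showing that a concrete theta identity fails, e.g.\ that $\operatorname{tr}^2/\det$ of $g_{x,p,q,i}$ and of $g_{x,p,q,j}$ differ as functions of $q$ when $t_i\neq t_j$; this is plausible but nowhere checked, and ``the entries visibly escape all of these'' is not a proof. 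Until (i) and (ii) are actually carried out, your argument has a genuine gap precisely where the paper substitutes the Drinfeld--Simpson input.
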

 \begin{proof}
It follows from the Drinfeld-Simpson theorem that any $G$-bundle on a non-compact curve is trivial. Hence for any two sequences $(p,y_1,\ldots,y_m)$, $(q,z_1,\ldots,z_m)$ there exist a number $N$ and a sequence of Hecke modifications $H_{x,l_1},\ldots,H_{x,l_N}$ that sends $E_{p,\mb{y}}$ to $E_{q,\mb{z}}$. Fix any $p_0$. Consider the subgroup $G_0$ of $\PGL_2^m$ generated by $g(p_0,p_1)\cdots g(p_k,p_0)$ for all positive integers $k$ and sequences $(p_1,\ldots,p_k)$. It follows that $G_0$ acts transitively on $(\Pone)^m$. It can be shown that in this case $G_0$ coincides with $\PGL_2^m$. Arguing similarly to~\cite{EFK3}, Lemma 8.9, we get that $\phi$ is dominant for $p=p_n=p_0$. It follows that $\phi$ is dominant for any pair $(p,p_n)$.
 \end{proof}

We will prove that $H_x^n$ is compact for $n$ given by Lemma~\ref{LemMapIsDominant}. Writing $K$ as a sum of rank one kernels in~\eqref{EqHxToN} we get that $H_x^n$ is a sum of operators
\begin{multline*}L(f)=\int_{p_1,\ldots,p_n}U_{p,p_1}\cdots U_{p_{n-1},p_n}f(p_n,-)\\
K_1(p,p_1,x)\cdots K_n(p_{n-1},p_n,x) dp_1\cdots dp_n,
\end{multline*}
where each $K_i=g_ih_i$ has rank one. Taking a limit if necessary we can assume that $g_i,h_i$ satisfy $\eps<\abs{g_i(z)},\abs{h_i(z)}<\eps^{-1}$ for some $\eps>0$ and all $z\in \Pone$.

Hence after the change of variable given by $\phi$ we get
\[L(f)=\int_{G,p_n}g(f)d\mu(g,p,p_n),\]
where $\mu=\phi_*\nu$ and $\nu$ is the measure \[K_1(p,p_1,x)\cdots K_n(p_{n-1},p_n,x) dp_1\cdots dp_{n-1}.\] Since $\nu$ depends continuously on $p,p_n$, $\mu$ also depends continuously on $p,p_n$. Also note that $\int_{G}d\mu(g,p,p_n)$ is the convolution of $K_1,\ldots,K_n$, which we denote by $S(p,p_n)$.

Using Theorem 8.6 and the proof of Proposition 3.13 from~\cite{EFK3} we get the following: \[f\mapsto \int_{G}g(f)d\mu(g,p,p_n)\] is a compact operator $S(p,p_n)R_{p,p_n}$ with norm at most $\abs{S(p,p_n)}$, so that $R$ has norm at most one.  Hence \[L(f)=\int_{p_n}R_{p,p_n}(f(p_n,-))S(p,p_n)dp_n.\] Since $\mu$ continuously depends on $p,p_n$, the operator $S(p,p_n)R_{p,p_n}$ norm-continuously depends on $p,p_n$. Since $\abs{S}$ belongs to the interval $(\eps^{2},\eps^{-2})$, the operator $R_{p,p_n}$ norm-continuously depends on $p,p_n$. 
Using the standard argument for any $\eps_1>0$ we find functions $c_1,c_2\colon \Pone \to \Pone$ with finite image such that $\norm{R_{p,p_n}-R_{c_1(p),c_2(p_n)}}<\eps_1$. We obtain

\begin{lem}
$L$ can be approximated by \[(L_c\phi)(p,y_1,\ldots,y_m)=\int_{p_n}R_{c_1(p),c_2(p_n)}(\phi(p_n,-))K(p,p_n) dp_n.\]
\end{lem}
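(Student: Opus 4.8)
The goal is to prove that the operator
\[
L(f)=\int_{p_n}R_{p,p_n}\big(f(p_n,-)\big)S(p,p_n)\,dp_n
\]
can be approximated in norm by operators of the form
\[
(L_c\phi)(p,y_1,\ldots,y_m)=\int_{p_n}R_{c_1(p),c_2(p_n)}\big(\phi(p_n,-)\big)K(p,p_n)\,dp_n,
\]
where $c_1,c_2\colon \Pone\to\Pone$ have finite image. The plan is to use the norm-continuity of $R_{p,p_n}$ in both arguments, established just above the statement, together with the fact that $\Pone\times\Pone$ is compact.

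First I would make precise which kernel ``$K(p,p_n)$'' appears in $L_c$: it is $S(p,p_n)$, the convolution $S(p,p_n)=\int_G d\mu(g,p,p_n)$ of $K_1,\ldots,K_n$, which depends continuously on $(p,p_n)$ and takes values in the interval $(\eps^2,\eps^{-2})$; so $\abs{S}$ is bounded above and below. Next, given $\eps_1>0$, I would invoke uniform continuity of the map $(p,p_n)\mapsto R_{p,p_n}$ from the compact space $\Pone\times\Pone$ into the Banach space of bounded operators on $L^2((\Pone)^m)$ (norm-continuity on a compact set is automatically uniform). This yields a $\delta>0$ such that whenever $(p,p_n)$ and $(p',p_n')$ are within $\delta$, we have $\norm{R_{p,p_n}-R_{p',p_n'}}<\eps_1$. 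Covering $\Pone$ by finitely many balls of radius $\delta/2$ and choosing for each ball a center, I would define $c_1$ (and identically $c_2$) to send every point of a ball to the center of that ball; this is a function with finite image, measurable, and satisfies $\norm{R_{p,p_n}-R_{c_1(p),c_2(p_n)}}<\eps_1$ for all $(p,p_n)$.

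With this choice of $c_1,c_2$, I would estimate the operator norm of $L-L_c$ directly. For any half-density $f$ in the last $m$ variables,
\[
\big(L-L_c\big)(f)(p,-)=\int_{p_n}\big(R_{p,p_n}-R_{c_1(p),c_2(p_n)}\big)\big(f(p_n,-)\big)S(p,p_n)\,dp_n,
\]
and applying the Cauchy--Schwarz / Minkowski integral inequality in $p_n$ (exactly as in Subsection~\ref{SubSecBounded}, comparing against the scalar operator with kernel $\abs{S}$) gives
\[
\norm{(L-L_c)f}\leq \eps_1\,\norm{H^S}\,\norm{f},
\]
where $H^S$ is the bounded scalar integral operator with kernel $\abs{S(p,p_n)}$, whose norm is finite since $\abs{S}$ is a bounded continuous kernel on the compact space $\Pone\times\Pone$. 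Letting $\eps_1\to0$ produces the claimed norm approximation. The one genuinely delicate point is ensuring that $L_c$ is again an operator of the sort we can handle in the next step --- that is, that $R_{c_1(p),c_2(p_n)}$, being piecewise constant in $(p,p_n)$ with finitely many values, makes $L_c$ a finite sum of operators each of the form (characteristic function in $p$) $\otimes$ (characteristic function in $p_n$) $\otimes$ (fixed compact operator $R$ in the remaining variables), which is visibly compact; this is what the finite image of $c_1,c_2$ buys us, and it is the reason for phrasing the lemma this way rather than merely asserting norm-density of finite-rank operators. The main obstacle is therefore not the estimate itself but bookkeeping: one must keep straight that the $L^2$-norms are taken in the last $m$ variables only, that the measure $\nu$ (hence $\mu$, hence $S$) genuinely depends continuously on the endpoints $p,p_n$ after the substitution $\phi$, and that $\abs{S}$ stays bounded away from $0$ and $\infty$ so that dividing out $S$ to isolate $R$ is legitimate --- all of which were arranged in the paragraphs preceding the statement.
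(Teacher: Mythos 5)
Your proposal is correct and follows essentially the same route as the paper: norm-continuity of $R_{p,p_n}$ on the compact space $\Pone\times\Pone$ (which the paper extracts from the continuity of $\mu$ and the bounds $\eps^2<\abs{S}<\eps^{-2}$) gives a uniform piecewise-constant approximation $R_{c_1(p),c_2(p_n)}$, and the Cauchy--Schwarz estimate against the bounded scalar kernel $\abs{S}$ converts this into a norm approximation of $L$ by $L_c$. You have merely filled in the details the paper compresses into ``the standard argument,'' and your observation that the kernel written as $K(p,p_n)$ in the lemma should be $S(p,p_n)$ matches the paper's own subsequent usage.
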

The functions $c_1,c_2$ define two partitions of $\Pone$ into finite number of pieces $U_i^k$ such that $c_k$ is constant on $U_i^k$. For $p\in U_i^1$, $p_n\in U_j^2$ denote $R_{c_1(p),c_2(p_n)}$ by $R_{ij}$. We have $L_c=\sum L_{ij}$, where \[L_{ij}=\chi_{U_i^1}\int_{U_{j}^2}R_{ij}(\phi(p_n,-))S(p,p_n) dp_n.\] Now each $L_{ij}$ is the tensor product of two compact operators: rank one operator with kernel $\chi_{U_i^1}(p)\chi_{U_j^2}(p_n)S(p,p_n)$ in variable $p$ and $R_{ij}$ in variables $y_1,\ldots,y_m$. Hence $L_{ij}$ is compact, so $L$ is compact.







\end{document}